\newcommand{\stkout}[1]{\ifmmode\text{\sout{\ensuremath{#1}}}\else\sout{#1}\fi}
\definecolor{red}{rgb}{1.0,0.0,0.0}
\definecolor{blu}{rgb}{0.0,0.0,1.0}
\definecolor{gre}{rgb}{0.03,0.50,0.03}
\definecolor{darkviolet}{rgb}{0.58, 0.0, 0.83}
\def\eps{\varepsilon}
\DeclareMathOperator{\Hcv}{\mathcal{H}_{\mbox{\footnotesize}{cv}}}
\newtheorem{Theorem}{Theorem}[section]
\newtheorem{Proposition}[Theorem]{Proposition}
\newtheorem{Lemma}[Theorem]{Lemma}
\newtheorem{Remark}[Theorem]{Remark}
\numberwithin{equation}{section}
\def\R{\mathbb R}
\def\E{\mathbb E}
\def\d{\mathrm{d}}
\def\<{\left\langle }
\def\>{\right\rangle }
\def\eps{\varepsilon}
\def\A{{\cal A}}
\def\P{{\cal P}}
\def\R{{\mathbb R}}
\def\E{{{\rm I} \kern -.15em {\rm E}    }}
\newtheoremstyle{mytheorem}
{6pt}
{6pt}
{\itshape}
{-0pt}
{\large \scshape}
{}
{1em}
{}
\newtheoremstyle{myremark}
{6pt}
{10pt}
{\rm}
{-0pt}
{\large \scshape}
{}
{1em}
{}
\def\eps{\varepsilon}
\def\A{\mathcal{A}}
\def\d{\delta}
\def\eps{\varepsilon}
\def\P{\mathfrak{P}}
\def\A{\mathcal{A}}
\def\norm{{\| \kern -.05em | }}
\newcommand{\reals}{{{\rm I} \kern -.15em {\rm R} }}
\newcommand{\nat}{{{\rm I} \kern -.15em {\rm N} }}
\def\R{\mathbb R}
\def\E{\mathbb E}
\def\P{\mathbb P}
\def\F{\mathbb F}
\def\to{\rightarrow}
\def\d{\mathrm{d}}
\def\<{\left\langle }
\def\>{\right\rangle }
\def\1{\mathbbm{1}}
\def \R{\mathbb{R}}
\def \E{\mathbb{E}}
\def \F{\mathbb{F}}
\def \P{\mathbb{P}}
\def \eps{\varepsilon}
\def\beqs{\begin{eqnarray*}}
\def\enqs{\end{eqnarray*}}
\def\beq{\begin{eqnarray}}
\def\enq{\end{eqnarray}}
\newcommand{\nc}{\newcommand}
\nc{\esssup}{\mathop{\mathrm{ess\,sup}}}
\newlength{\llegende}
\definecolor{Gcolor}{rgb}{1,0,0.5}
\begin{document}
\title{High risk aversion Merton's problem\\  without transversality conditions}
\author{
Enrico Biffis\footnote{Department of Finance, Imperial Business School, London SW7 2AZ, UK, email: \texttt{e.biffis@imperial.ac.uk}.} \ \ \  \ \ Cristina Di Girolami\footnote{Dipartimento di Matematica, Universit\`a Alma Mater Studiorum Bologna, Piazza di Porta San Donato, 5, 40126 Bologna BO, Italy, email: \texttt{cristina.digirolami2@unibo.it}.} \ \ \ \ \ \ Salvatore Federico\footnote{Dipartimento di Matematica, Universit\`a Alma Mater Studiorum Bologna, Piazza di Porta San Donato, 5, 40126 Bologna BO, Italy, email: \texttt{s.federico@unibo.it}.} \ \ \ \  \ \ Fausto Gozzi\footnote{Dipartimento di Economia e Finanza, Libera Università degli Studi Sociali ``Guido Carli'', Rome, Italy, email:  \texttt{f.gozzi@luiss.it}.}
}

\maketitle

{\bf Abstract} {\footnotesize{{This paper revisits the classical Merton portfolio choice problem over infinite horizon for high risk aversion, addressing technical challenges related to establishing the existence and identification of optimal strategies. Traditional methods rely on perturbation arguments and/or impose restrictive conditions, such as large discount rates and/or bounded strategies, to ensure well-posedness. Our approach leverages the problem's homogeneity to directly solve the associated Hamilton-Jacobi-Bellman equation and verify the optimality of candidate strategies without requiring transversality conditions.}}
}
\medskip

{\bf Keywords}: Merton problem, Hamilton-Jacobi-Bellman equation, verification theorem, transversality conditions.

{\bf JEL Classification}: C02; C61; G11.

{\bf AMS Classification}: 	93E20; 49L12.

\section{Introduction}
Since its appearance, the seminal contribution of Merton \cite{Merton1969,Merton1971} has continued to stimulate research on lifetime consumption and portfolio choice. While recent work brings machine–learning insights to the classical setting \cite{DDJZ}, a persistent technical challenge in the traditional model concerns the high–risk–aversion case (\(\gamma>1\) in our notation). In this case, giving a complete verification argument is notoriously cumbersome (a detailed overview is provided in \cite{HerHobJe2020MAFI}), and, apart from a few notable exceptions reviewed below, the issue has often been glossed over or circumvented by use of powerful duality methods (see, for example, \cite{KS98}).
	
	The Merton model features a wealth process with linear dynamics whose controlled drift can be, in principle, pushed arbitrarily high by leveraging the risky asset allocation. In addition, even if the agent is prohibited from borrowing or shorting, the multiplicative noise alone can produce large excursions. When $\gamma>1$, the agent's utility can explode to $-\infty$ when consumption falls to zero from above.
	Proving existence of an optimal policy therefore requires ruling out the possibility to postpone consumption to bet on unbounded future payoffs. As a result, any existence proof must simultaneously
	(i) ensure finiteness of the value function (well–posedness), (ii) enforce the intertemporal budget constraint, and (iii) exclude leftover value at infinity (a transversality condition); otherwise, the agent could raise current consumption slightly without violating the budget constraint, but contradicting optimality.
	
	Point i) has been addressed by Merton (\cite{Merton1971}) by requiring that  the agent's subjective discount rate  is ``sufficiently large''  and later by identifying a precise lower bound, which corresponds to condition  \eqref{eq:standing} in this paper. Point ii) is addressed by noting that the Merton problem is associated with a Black and Scholes market for which an explicit local martingale deflator exists, which is a true martingale thanks to Novikov's condition. This is a cornerstone of the martingale approach to the problem pioneered by \cite{CH1989}.
	As for point  iii), the transversality condition requires a vanishing  limit behavior of the  discounted value function, which is notoriously problematic when \(\gamma>1\).
	
	Important contributions to addressing the case of high risk aversion in the Merton problem include \cite{K86},  \cite{DN}, and  \cite{HerHobJe2020MAFI}. \cite{K86} are mainly concerned with the possibility of bankruptcy, and augment the model with a payment the agent can receive at the zero wealth boundary. They recover the standard Merton problem as the payment vanish from above
	under the assumption of positive discounting and a strictly positive riskless rate.
	 \cite{DN}  uses a deterministic perturbation of the value function to recover the candidate solution of the Merton problem as the perturbation vanish, but they restrict the investment strategy to bounded portfolio weights.
	 More recently,  \cite{HerHobJe2020MAFI} offer a self-contained treatment of the Merton problem for all parameter configurations ensuring well-posedness. They use again a perturbation approach, this time using the candidate optimal
	 consumption stream to scale the associated wealth process. They also  work within a subset of admissible strategies for which the transversality condition holds ({\it fiat} conditions in their language). Their Theorem~5.1 and Corollary~5.4 are the counterparts of our results.
	
	 Our method provides a concise, self-contained proof of existence and optimality for the Merton problem when \(\gamma > 1\), under the standard assumption that the agent's discount  rate satisfies the lower bound given by \eqref{eq:standing}.
	Instead of analyzing perturbations of the original problem, as done in \cite{K86,DN,HerHobJe2020MAFI}, we adopt a more direct approach that avoids the difficulties associated with verifying transversality conditions.
More specifically, leveraging the problem's homogeneity, we proceed through the following steps\footnote{Our approach rigorously develops, makes precise, and simplifies the arguments sketched in \cite{biffis2020optimal} in a different setting including labor income.}:
\begin{enumerate}
    \item First, we demonstrate that the value function is finite, nontrivial, and homogeneous (Subsection \ref{subsec:finiteness}).
    \item Next, we prove that the value function solves the associated Hamilton-Jacobi-Bellman (HJB) equation, which allows us to explicitly derive its form (Subsection \ref{sec:HJB}).
    \item Finally, we verify that the candidate optimal feedback map indeed yields an optimal strategy (what we call ``half-verification''). This step turns out to be straightforward and does not require checking any transversality conditions  (Subsection \ref{sub:ver}).
\end{enumerate}
Subsection \ref{remTrCon} provides an overview of the method and compares it with the classical verification approach.

\section{Formulation of the stochastic optimal control problem}
Let $W$ be a standard one dimensional Brownian motion defined on a filtered probability space $(\Omega,\mathcal{F},\F:=(\mathcal{F}_{t})_{t\geq 0}, \P)$ satisfying the usual conditions.
We assume that an investor may continuously invest, over the time interval $[0,\infty)$, in a money market account, with market value $(S^o_{t})_{t\geq 0}$, and a risky asset, with market value denoted  by $(S_{t})_{t\geq 0}$. The market model is the standard Black-Scholes model.
 The money market account has deterministic dynamics
$$
\d S^o_{t}=rS^o_{t}\d t, \ \ \ S^o_{0}=1,
$$
with $r\in\R$, whereas the risky asset dynamics obeys the SDE
$$
\d S_{t}=\mu S_{t}\d t + \sigma S_{t}\d W_t, \ \ \ S_{0}=s_{0}>0,
$$
with $\mu\in\R, \,\sigma>0$.  Hence, $(S_{t})_{t\geq 0}$ is a geometric Brownian motion and we can write
$$
S^o_{t}=e^{rt}, \ \ \ \ S_{t}=s_{0}\exp\left[\left(\mu-\frac{\sigma^2}{2}\right)t+\sigma W_{t}\right].
$$
Introducing the notation $\displaystyle{\lambda:=\frac{\mu-r}{\sigma}}$ for the market risk premium, we can also write
$$
\d S_{t}=(r+\sigma\lambda)\, S_{t}\d t + \sigma S_{t}\d W_{t}, \ \ \ S_{0}=s_{0}>0.
$$

Let us denote by $L^{2,\F}_{\mathbf{loc}} (\Omega\times [0,\infty);\mathbb{R})$ and $L^{1,\F}_{\mathbf{loc}} (\Omega\times [0,\infty);\mathbb{R}^{+})$ the spaces  of $\F$-adapted processes $(Z_{t})_{t\geq 0}$ valued in $\R$ and $\R^{+}$, respectively, where the latter denotes the set of nonnegative real numbers,  and  such that, $\E\left[\int_{0}^{R}|Z_{t}|^{2}\d t\right]<\infty$ and  $\E\left[\int_{0}^{R}|Z_{t}|\d t\right]<\infty$, respectively, for every $R>0$.

An investor endowed with initial wealth $x>0$ can dynamically rebalance her wealth and support a consumption flow. In particular, let us introduce the following stochastic processes:
\begin{enumerate}[(i)]
\item $(X_t)_{t\geq 0}$, representing the market value of the agent's financial wealth over time;
\item $(\pi_{t})_{t\geq 0}\in L^{2,\F}_{\mathbf{loc}} (\Omega\times [0,\infty);\mathbb{R})$, representing the investment strategy of the investor expressed in terms of \emph{amount of money} allocated to the risky asset, whereby a negative allocation is interpreted as short selling;
\item  $(c_{t})_{t\geq 0}\in L^{1,\F}_{\mathbf{loc}} (\Omega\times [0,\infty);\mathbb{R}^{+})$, representing  the nonnegative (rate of)  consumption out of financial wealth.
\end{enumerate}
The above processes will also be denoted simply by $X,\pi,c$ when no confusion arises.

Assuming that the portfolio is self-financing, in the sense that there are no capital injections and that consumption is the only source of withdraws, the wealth process obeys the following SDE:
\begin{align}\label{stateMerton}
\d X_{t}&=\pi_{t} \frac{\d S_{t}}{S_t}+(X_t-\pi_{t}) \frac{\d S^o_t}{S^o_{t}}-c_t\d t\nonumber \\&=\pi_{t} \left((r+\sigma\lambda)\d t+\sigma\d W_{t}\right)+(X_{t}-\pi_{t}) \,r \d t-c_t\d t\\
&= rX_t dt +\sigma\lambda \pi_{t}\d t -c_t\d t +\sigma \pi_{t}\d W_{t}.\nonumber
\end{align}
The state equation governing the wealth dynamics is therefore
\begin{equation}	\label{eqCSE}
\begin{cases}
\d X_t = (r X_t + \sigma\lambda\pi_t)   \, \d t + \sigma \pi_t   \, \d W_t - c_t \, \d t,\\
X_{0}=x>0,
\end{cases}
\end{equation}
where $X$ is the state variable of the stochastic control problem we are going to define, whereas  $(c,\pi)$ denotes the pair of control variables.  We denote by
 $(X_t^{x; c,\pi})_{t\geq 0}$,
or simply by
$X$
if no confusion arises,
the unique  strong solution of the controlled SDE introduced above. The set of admissible strategies considered is defined as:
$$\mathcal{A}(x)=\Big\{(c, \pi)\in  L^{1,\F}_{\mathbf{loc}} (\Omega\times [0,\infty);\mathbb{R}^{+})  \times L^{2,\F}_{\mathbf{loc}} (\Omega\times [0,\infty);\mathbb{R}): \  X\geq 0\Big\}.$$

Given  $\rho\in\mathbb R$, the objective functional to be maximized over the set $\mathcal{A}(x)$ is
$$
J(x;c,\pi)=J (c)=  \mathbb{E} \left[ \int_0^\infty e^{-\rho s} \frac{c_s^{1-\gamma}}{1-\gamma} \, \d s \right],
$$
where we focus on the case of high risk aversion $\gamma>1$, which is  more problematic and often neglected in the literature (see discussions in \cite{biffis2020optimal} and \cite{HerHobJe2020MAFI}).
We assume
\begin{equation}\label{eq:standing}
	\rho \;>\; (1-\gamma)\!\left(r+\frac{\lambda^2}{2\gamma}\right),
\end{equation}
which is the standard Merton finiteness condition for $\gamma>1$; see \cite[equation~(20)]{biffis2020optimal}, \cite[equation~(5)]{HerHobJe2020MAFI}, and Section~\ref{subsec:finiteness} for further discussion.
Clearly, $J(c)$ is well defined for every $c\in L^{1,\F}_{\mathbf{loc}} (\Omega\times [0,\infty);\mathbb{R}^{+})$, is
nonpositive and possibly equal to $-\infty$.
We define the value function
 \begin{equation}		\label{eqValueFunc}
	V(x) := \sup_{(c,\pi)\in\mathcal{A}(x)}J(x;c,\pi).
\end{equation}
As a consequence of the nonpositivity of $J$, we have  $V\in[-\infty,0]$.
 A consumption-investment strategy
 $(\widehat{c},\widehat {\pi})\in \mathcal{A}(x)$ starting at $x>0$ is said \emph{optimal} if
$$
 V(x) =\mathbb{E} \left[ \int_0^\infty e^{-\rho s} \frac{\widehat{c_{s}}^{1-\gamma}}{1-\gamma} \, \d s \right].
$$
%
%
%
%

\section{Solution}
We provide the solution of our stochastic optimal control problem basically  merging a direct approach on the Hamilton-Jacobi-Bellman equation and a verification approach to prove the optimality of the candidate optimal feedback map.

\subsection{Estimates on admissible consumption plans}
In our Black-Scholes setting, the market is complete and, by no arbitrage,  there exists a unique measure equivalent to $\mathbb{P}$ such that $(S^0,S)$ is a local martingale. Its density, denoted by $Y$, is called local martingale deflator and is the unique strictly positive $\mathbb F$–adapted local martingale $Y$ with $Y_0=1$ and such that $YS^0$ and $YS$ are $\mathbb P$–local martingales. Such process $Y$ is explicitly written as
$$
Y_t=\exp(-rt-\lambda W_t-\tfrac12\lambda^2 t).
$$
The following Lemma then provides a useful estimate on admissible consumption plans.

\begin{Lemma}[Budget constraint via true martingale deflator]\label{lem:budget-Y}
	For any admissible control $(c,\pi)\in\A(x)$ with wealth $X$ and any $\F$-stopping time $\tau$ (not necessarily bounded), we have
	\begin{equation}\label{eq:budget-Y}
		\E\!\left[\int_{0}^{\tau} Y_s\,c_s\,\d s\right]\;\le\; x.
	\end{equation}
\end{Lemma}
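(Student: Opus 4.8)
The plan is to show that the \emph{deflated wealth}, augmented by the deflated cumulative consumption, is a nonnegative local martingale, hence a supermartingale, and then to apply optional sampling. First I would compute the dynamics of $Y$ by Itô's formula applied to $Y_t=\exp\!\big(-rt-\lambda W_t-\tfrac12\lambda^2 t\big)$: the drift of the exponent is $-r-\tfrac12\lambda^2$, its diffusion coefficient is $-\lambda$, and the Itô correction $+\tfrac12\lambda^2$ cancels, giving
\begin{equation*}
\d Y_t = -Y_t\big(r\,\d t + \lambda\,\d W_t\big).
\end{equation*}

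Next I would apply the product rule to $Y_tX_t$, using the state equation \eqref{eqCSE} for $\d X_t$ and the covariation $\d\langle Y,X\rangle_t=(-Y_t\lambda)(\sigma\pi_t)\,\d t=-\lambda\sigma Y_t\pi_t\,\d t$. The point is that all the drift contributions cancel except the consumption term: the two $rX_tY_t$ terms cancel, and the term $\sigma\lambda\pi_t Y_t$ coming from the controlled drift is exactly offset by the covariation $-\lambda\sigma Y_t\pi_t$. One is left with
\begin{equation*}
\d(Y_tX_t) = -Y_t c_t\,\d t + Y_t\big(\sigma\pi_t-\lambda X_t\big)\,\d W_t,
\end{equation*}
so that, setting
\begin{equation*}
M_t:=Y_tX_t+\int_0^t Y_s c_s\,\d s = x+\int_0^t Y_s\big(\sigma\pi_s-\lambda X_s\big)\,\d W_s,
\end{equation*}
the process $M$ is a continuous local martingale with $M_0=x$. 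That the stochastic integral is genuinely a local martingale follows because the integrand $Y_s(\sigma\pi_s-\lambda X_s)$ is pathwise locally square–integrable: $Y$ and $X$ are continuous (hence a.s.\ bounded on compacts) and $\pi\in L^{2,\F}_{\mathbf{loc}}$ gives $\int_0^R\pi_s^2\,\d s<\infty$ a.s.\ for every $R$, so one localizes along the natural sequence of reducing stopping times.

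Finally I would exploit nonnegativity. Since $X_t\ge 0$ (admissibility), $Y_t>0$, and $c_s\ge 0$, both summands in $M_t$ are nonnegative, so $M$ is a nonnegative local martingale and therefore a supermartingale. For a nonnegative supermartingale, optional sampling holds at an \emph{arbitrary} $\F$-stopping time $\tau$, including unbounded ones: the a.s.\ limit $M_\infty:=\lim_{t\to\infty}M_t$ exists by the supermartingale convergence theorem, and Fatou's lemma applied to $M_{\tau\wedge n}$ yields $\E[M_\tau]\le M_0=x$. Dropping the nonnegative term $\E[Y_\tau X_\tau]\ge 0$ from $\E[M_\tau]=\E[Y_\tau X_\tau]+\E\big[\int_0^\tau Y_s c_s\,\d s\big]$ then gives \eqref{eq:budget-Y}.

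I expect the only genuinely delicate point to be the passage to an \emph{unbounded} $\tau$: one must invoke the special optional sampling property of nonnegative supermartingales (rather than the bounded-stopping-time version) and justify the limiting argument via monotone/Fatou convergence, since no uniform integrability of $M$ is assumed. The rest — the Itô computation and the drift cancellation — is routine, and crucially the argument never uses any true-martingale (Novikov) property of $Y$, only its strict positivity together with $X\ge 0$ and $c\ge 0$.
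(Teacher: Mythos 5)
Your proof is correct and takes essentially the same approach as the paper: you build the identical process $M_t=Y_tX_t+\int_0^t Y_s c_s\,\d s$, show it is a nonnegative local martingale, and conclude by dropping the nonnegative deflated-wealth term. The only differences are in execution, not substance --- you obtain local martingality by a direct It\^o drift-cancellation on $Y_tX_t$ (correctly noting, in passing, that the Novikov/true-martingale property of $YS^0$ and $YS$ invoked by the paper is never actually needed, only local martingality and positivity), and you close with the nonnegative-supermartingale form of optional sampling plus Fatou, where the paper localizes $M$ and uses monotone convergence; both closings are standard and valid.
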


\begin{proof}
{
	Let $\varphi^0_t := \frac{X_t-\pi_t}{S^0_t}$ and $\varphi^1_t := \frac{\pi_t}{S_t}$, so that $X_t=\varphi^0_tS^0_t + \varphi^1_t S_t$ and
	$\d X_t=\varphi^0_t\,\d S^0_t+\varphi^1_t\,\d S_t - c_t\,\d t$, as the strategy is self-financing. Define
	\[
	M_t:=Y_tX_t+\int_0^t Y_s\,c_s\,\d s.
	\]
	By straightforward computations, we can write:
	\[	\d(Y_tX_t)=\varphi^0_t\,\d(Y_tS^0_t)+\varphi^1_t\,\d(Y_tS_t)-Y_t c_t\,\d t.
	\]
Hence, we have
$$
\d M_t=\varphi^0_t\,\d(Y_tS^0_t)+\varphi^1_t\,\d(Y_tS_t).
$$
As observed, in our Black--Scholes setup, Novikov condition holds and both $Y_tS^0_t$ and $Y_tS_t$ are $\mathbb P$-martingales; therefore $M$ is a local martingale with $M_0=x$, and $M_t\ge0$ since $Y,X,c\ge0$.
}
	To obtain the inequality, let $(\sigma_n)$ denote a sequence of stopping times localizing $M$. Optional sampling then gives
	\[
	\E\!\left[M_{\tau\wedge\sigma_n}\right]=\E[M_0]=x.
	\]
	Dropping $\E[Y_{\tau\wedge\sigma_n}X_{\tau\wedge\sigma_n}]\ge0$ and using monotone convergence as $n\uparrow\infty$ finally yields \eqref{eq:budget-Y}.
\end{proof}

\subsection{Finiteness and homogeneity of $V$}\label{subsec:finiteness}
In this subsection we prove, beforehand, some preliminary properties of the value function.
\begin{Proposition}		\label{VfinitaOm}
There exists $a>0$ such that
\begin{equation}   	\label{equa}
V(x)=a\dfrac{x^{1-\gamma}}{1-\gamma}.
\end{equation}
\end{Proposition}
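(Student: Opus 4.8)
\emph{Plan.} The identity \eqref{equa} will follow by combining three facts: (a) the value function is homogeneous of degree $1-\gamma$; (b) it is bounded below by a finite quantity (finiteness); and (c) it is bounded above by a strictly negative quantity (nontriviality). Together (b)--(c) force $V(1)\in(-\infty,0)$, and then (a) yields the stated form with $a:=(1-\gamma)V(1)\in(0,\infty)$. For the homogeneity, note that the state equation \eqref{eqCSE} is linear in $(X,\pi,c)$ and the running reward is positively homogeneous of degree $1-\gamma$ in $c$. Hence, for $\kappa>0$, the map $(c,\pi)\mapsto(\kappa c,\kappa\pi)$ is a bijection from $\mathcal{A}(x)$ onto $\mathcal{A}(\kappa x)$ carrying the wealth $X$ to $\kappa X$ (so that the constraint $X\ge 0$ is preserved), and $J(\kappa x;\kappa c,\kappa\pi)=\kappa^{1-\gamma}J(x;c,\pi)$. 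Taking the supremum gives
\[
V(\kappa x)=\kappa^{1-\gamma}V(x),\qquad\text{so that }V(x)=x^{1-\gamma}V(1)
\]
upon setting $x=1$, $\kappa=x$.

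\emph{Nontriviality (upper bound).} I would first let $\tau\uparrow\infty$ in \eqref{eq:budget-Y} and use monotone convergence to obtain the global budget constraint $\E[\int_0^\infty Y_s c_s\,\d s]\le x$. Then, for any multiplier $y>0$, concavity of $u(c):=c^{1-\gamma}/(1-\gamma)$ gives the pointwise Fenchel inequality $e^{-\rho s}u(c_s)\le e^{-\rho s}u(c_s^{*})+yY_s(c_s-c_s^{*})$ with $c_s^{*}:=(yY_se^{\rho s})^{-1/\gamma}$. Integrating, taking expectation, and invoking the budget constraint yields $J(c)\le \E[\int_0^\infty e^{-\rho s}u(c_s^{*})\,\d s]+y\big(x-\E[\int_0^\infty Y_sc_s^{*}\,\d s]\big)$. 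The lognormal moments of $Y$ give $\E[Y_s^{(\gamma-1)/\gamma}]\,e^{-\rho s/\gamma}=e^{-\theta s}$ with $\theta=\tfrac{\gamma-1}{\gamma}r+\tfrac{\gamma-1}{2\gamma^{2}}\lambda^{2}+\tfrac{\rho}{\gamma}$, and the inequality $\theta>0$ is \emph{exactly} \eqref{eq:standing}; this makes both $s$-integrals converge. Choosing $y$ so that $\E[\int_0^\infty Y_sc_s^{*}\,\d s]=x$ collapses the bound to $V(x)\le\tfrac{\theta^{-\gamma}}{1-\gamma}x^{1-\gamma}<0$.

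\emph{Finiteness (lower bound) and conclusion.} It suffices to exhibit one admissible strategy of finite value. The constant-proportion Merton strategy $\pi_t=pX_t$, $c_t=\kappa X_t$ with $p=\lambda/(\sigma\gamma)$ and $\kappa>0$ keeps the wealth a strictly positive geometric Brownian motion, hence is admissible, and a direct computation gives $J=\tfrac{(\kappa x)^{1-\gamma}}{1-\gamma}\int_0^\infty e^{g s}\,\d s$ with $g=(1-\gamma)\big(r+\tfrac{\lambda^{2}}{2\gamma}\big)+(\gamma-1)\kappa-\rho$. By \eqref{eq:standing} one has $g<0$ for $\kappa$ small enough, so the integral converges and $J>-\infty$; hence $V(1)\ge J>-\infty$. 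Combining the three steps, $V(1)\in(-\infty,0)$, so $V(x)=x^{1-\gamma}V(1)=a\,\tfrac{x^{1-\gamma}}{1-\gamma}$ with $a=(1-\gamma)V(1)>0$.

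\emph{Main obstacle.} The homogeneity and the lower bound are essentially routine; the crux is the upper bound. There one must justify passing to the global budget constraint (the unbounded-$\tau$ form supplied by Lemma~\ref{lem:budget-Y}), legitimately integrate the pointwise Fenchel inequality under the expectation, and---most importantly---recognize that the convergence of the dual objective over $[0,\infty)$ holds precisely under the standing assumption \eqref{eq:standing}. It is this single condition that simultaneously delivers a finite and strictly negative upper bound, thereby pinning down $a\in(0,\infty)$.
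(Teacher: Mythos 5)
Your proposal is correct, but its central step takes a genuinely different route from the paper's. The finiteness part (constant-proportion strategy $\pi_t=\frac{\lambda}{\sigma\gamma}X_t$, $c_t=\kappa X_t$ with $\kappa$ small, exponent $g<0$ under \eqref{eq:standing}) and the homogeneity part (the bijection $\A(x)\to\A(\alpha x)$ via linearity of \eqref{eqCSE} and $J(\alpha c)=\alpha^{1-\gamma}J(c)$) coincide with the paper's proof. For nontriviality, however, the paper argues qualitatively and by contradiction: if $V\equiv 0$, a maximizing sequence forces $(c^{(n)}_s)^{1-\gamma}\to 0$, hence $c^{(n)}_s\to\infty$ a.e.\ on $\Omega\times[0,1]$ (here $\gamma>1$ is essential), and Fatou's lemma then contradicts the \emph{finite-horizon} budget constraint (Lemma~\ref{lem:budget-Y} with $\tau\equiv 1$); this step needs no moment computations and does not even invoke \eqref{eq:standing}. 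You instead run a Cox--Huang-type duality estimate: global budget constraint by monotone convergence in $\tau$, Fenchel inequality with dual candidate $c^{*}_s=(yY_se^{\rho s})^{-1/\gamma}$, and the lognormal moment identity $\E\left[Y_s^{(\gamma-1)/\gamma}\right]e^{-\rho s/\gamma}=e^{-\theta s}$; your formula for $\theta$ checks out, and $\gamma\theta=\rho-(1-\gamma)\left(r+\frac{\lambda^2}{2\gamma}\right)$, so $\theta>0$ is indeed exactly \eqref{eq:standing}. Your route buys more than the paper's: the resulting bound $V(x)\le\frac{\theta^{-\gamma}}{1-\gamma}x^{1-\gamma}$ is sharp, since $\theta^{-\gamma}$ is precisely the constant $a$ of Proposition~\ref{expression a}, and taking $\kappa=\theta$ in your lower bound attains it --- so your argument would in principle identify $V$ in closed form without the HJB analysis of Subsections~\ref{sec:HJB}--\ref{sub:ver}. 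The trade-off is that you import exactly the martingale-duality machinery the paper is structured to avoid, together with the integrability justifications (which you correctly flag, and which are routine here since each term on the dual side is either of one sign or integrable under $\theta>0$), whereas the paper's softer argument delivers only ``$V$ is not identically zero'' --- which is all that its subsequent HJB step requires (see Remark~\ref{rm:uniq}). One minor point in your favor: your normalization $a=(1-\gamma)V(1)>0$ is the one consistent with \eqref{equa} evaluated at $x=1$; the paper's line $a:=-(1-\gamma)V(1)$ carries a stray sign.
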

\begin{proof}
\begin{enumerate}	
\item Finiteness and non–triviality under \eqref{eq:standing}.
\begin{itemize}
\item \emph{Finiteness.}
Pick any $\kappa\in(0,\kappa_0)$, where the threshold  $\kappa_0 $  is obtained from condition  \eqref{eq:standing}, yielding:
\[
\kappa_0 \;:=\; \frac{\rho}{\gamma-1} - r - \frac{\lambda^2}{2\gamma}\;>\;0.
\]
Consider the proportional feedback strategy defined as
\[
\pi_t \;=\; \frac{\lambda}{\sigma\gamma}\,X_t,\qquad c_t \;=\; \kappa\,X_t .
\]
Then corresponding wealth equation $X$ is a GBM solving the SDE
\[
\frac{\d X_t}{X_t} \;=\; \Big(r+\frac{\lambda^2}{\gamma}-\kappa\Big)\d t \;+\; \frac{\lambda}{\gamma}\,\d W_t.
\]
Hence, for $\gamma>1$, we have
{
\[
\E\!\left[e^{-\rho s}\,\frac{c_s^{\,1-\gamma}}{1-\gamma} \right]
= \frac{\kappa^{\,1-\gamma}}{1-\gamma}\,x^{1-\gamma}
\exp\!\Big(\big[(1-\gamma)\Big(r-\kappa+\frac{\lambda^2}{2\gamma}\Big)-\rho\big]\,s\Big).
\]}
As $\kappa\in(0,\kappa_0)$, the exponent in the exponential above is negative, yielding
\[
J(c)\;=\;\E\!\left[\int_0^\infty e^{-\rho s}\,\frac{c_s^{\,1-\gamma}}{1-\gamma}\,\d s\right]\;>\;-\infty,
\]
which proves $V(x)>-\infty$.
\item \emph{Non–triviality.}
By contradiction, assume that $V(x)\equiv0$ on $(0,\infty)$. Since $u(c)=\frac{c^{1-\gamma}}{1-\gamma}\le0$,
there exists a sequence $(c^{(n)},\pi^{(n)})\in A(x)$ with
\[
0\;\ge\;\lim_{n\to\infty}\E\!\left[\int_0^1 e^{-\rho s}\,\frac{(c^{(n)}_s)^{1-\gamma}}{1-\gamma}\,\d s\right]=0.
\]
Hence, along a subsequence $(c^{(n)}_s)^{1-\gamma}\to0$ a.e. on $\Omega\times[0,1]$, which implies $c^{(n)}_s\to\infty$ a.e.
By Lemma~3.1 (budget constraint with deflator $Y$) with $\tau\equiv1$, we have
\[
\E\!\left[\int_0^1 Y_s\,c^{(n)}_s\,\d s\right]\;\le\;x\quad\text{for all }n.
\]
Using $Y_s>0$ a.e. and Fatou’s lemma, we can write
\[
+\infty \;=\; \E\!\left[\int_0^1 \liminf_{n\to\infty}Y_s\,c^{(n)}_s\,\d s\right]
\;\le\;\liminf_{n\to\infty}\E\!\left[\int_0^1 Y_s\,c^{(n)}_s\,\d s\right]\;\le\;x,
\]
which gives a contradiction. Therefore $V$ is not identically zero on $(0,\infty)$.
\end{itemize}

\item $(1-\gamma)-$homogeneity.

Fix $\alpha>0$. Given  $(c,\pi)\in \A(x)$, set
$(\tilde c,\tilde\pi):=(\alpha c,\alpha \pi)$. {Clearly, $(\tilde{c},\tilde{\pi})\in  L^{1,\F}_{\mathbf{loc}} (\Omega\times [0,\infty);\mathbb{R}^{+})  \times L^{2,\F}_{\mathbf{loc}} (\Omega\times [0,\infty);\mathbb{R})$}. Moreover,
 by linearity of the
state equation, the unique strong solution satisfies
$X^{\alpha x;\tilde c,\tilde\pi}_t=\alpha\,X^{x;c,\pi}_t$ for all $t\ge0$.
Hence $X^{x;c,\pi}\ge0$ implies $X^{\alpha x;\tilde c,\tilde\pi}\ge0$, so that
$(\tilde c,\tilde\pi)\in \A(\alpha x)$. Conversely, if
$(\hat c_,\hat\pi)\in \A(\alpha x)$ then
$(c,\pi):=(\alpha^{-1}\hat c,\alpha^{-1}\hat\pi)\in \A(x)$.
Therefore $\alpha\,\A(x)=\A(\alpha x)$.


\medskip
Now,  by the $(1-\gamma)$–homogeneity
of $u(c)=\frac{c^{1-\gamma}}{1-\gamma}$,  we have  $J(\alpha c)=\alpha^{\,1-\gamma}J(c)$. Therefore, we obtain
\[
V(\alpha x)=\sup_{(\tilde c,\tilde\pi)\in \A(\alpha x)}J(\tilde c)
=\sup_{(c,\pi)\in \A(x)}J(\alpha c)=\alpha^{\,1-\gamma}V(x),\qquad \alpha>0.
\]
Letting $a:=-\,(1-\gamma)\,V(1)\in(0,\infty)$ (by item~1, $V$ is not identically $0$ and
$V\le0$), we get
\[
V(x)=a\, \frac{x^{\,1-\gamma}}{1-\gamma},
\]
which proves the claim.

\vspace{-.65cm}

\end{enumerate}
\hspace{5cm}
\end{proof}

\subsection{The value function as solution to the Hamilton-Jacobi-Bellman equation}\label{sec:HJB}
Standard arguments of stochastic control (see, e.g., \cite[Ch.\,3]{PhamBook} or \cite[Ch.\,4]{YongZhou}) lead to associate to the value function $V$  the stationary Hamilton-Jacobi-Bellman (HJB) equation:
\begin{equation}\label{HJBMerton3}
   \rho  v(x) =\mathcal{H}_{\max}(x,v'(x),v''(x)), \ \ \ \ x>0,\end{equation}
   where
\begin{equation}\label{maxH}\mathcal{H}_{\max}(x,p,P)= \sup_{(c,\pi)\in \mathbb{R}_{+}\times \mathbb{R} }\Hcv (x,p,P;c,\pi),  \ \ \ x>0, \  \ (p,P)\in \R^{2}, \ \ (c,\pi)\in \R^+\times \R
\end{equation}
and
$$
\Hcv (x,p,P; c,\pi)=rxp + \pi \sigma \lambda p+ \frac{1}{2} \pi^2 \sigma^2  P-cp +\frac{c^{1-\gamma}}{1-\gamma}, \ \ \  x>0, \  \ (p,P)\in \R^{2}.
$$
Note that, when  $p>0,\ P<0$, the unique maximum point of the function
$$\mathbb{R}_{+}\times \mathbb{R}\to\mathbb{R},\ \  \ (c,\pi)\mapsto \Hcv(x,p,P;c,\pi)$$ is provided by
\begin{equation}\label{maxcpi}
c_{\max}(x,p,P)= p^{-1/\gamma} , \quad \pi_{\max} (x,p,P)=-\frac{\lambda p}{\sigma  P} .
\end{equation}
Thus,  in this case,
\begin{equation}\label{eq318b}
\mathcal{H}_{\max}(x,p,P)=\Hcv (x,p,P; c_{\max},\pi_{\max})
\end{equation} so
$$\mathcal{H}_{\max}(x,p,P)=rxp-\frac{\lambda^{2}}{2}\frac{p^{2}}{P}+ \frac{\gamma}{1-\gamma }p^{\frac{\gamma -1}{\gamma}}, \ \ \ x>0, \  \ p>0,  \quad P<0.$$
Therefore, considering that  by Proposition \ref{VfinitaOm} we have $V'>0$ and $V''<0$, the value function $V$ is expected to solve  the HJB equation
\begin{equation}\label{HJB3}
\rho v(x)= rxv'(x)-\frac{\lambda^{2}}{2}\frac{v'(x)^{2}}{v''(x)}+ \frac{\gamma}{1-\gamma }v'(x)^{\frac{\gamma -1}{\gamma}}.
\end{equation}

The connection of $V$ with \eqref{HJB3} passes through the Dynamic Programming Principle, which reads as
\begin{equation}\label{DPP}
V\left({x}\right) =\sup_{(c,\pi)\in {\cal A}\left({x}\right) }\E\left[
\int_{0}^{ \tau_{(c,\pi)}} e^{-\rho s}\frac{c_{s}^{1-\gamma}}{1-\gamma}\d s +
e^{-\rho \tau_{(c,\pi)}}V (X_{\tau_{(c,\pi)}}) \right],
\end{equation}
for every family of  stopping times $\left(\tau_{(c,\pi)}\right)_{(c,\pi)\in {\cal A}\left({x}\right)}$.
This is a well-established equation in the general theory of stochastic optimal control, at least when the value function is known to be continuous (see \cite[Ch.\,4]{YongZhou} in the case when $\tau_{(c,\pi)}\equiv \hat t$ deterministic; \cite[Ch.\,3]{PhamBook} or \cite[Ch.\,2]{Touzibook} for our case).

\begin{Proposition}		\label{SolvesHJB}
$V$ solves \emph{HJB} \eqref{HJB3} in classical sense at each $x>0$.
\end{Proposition}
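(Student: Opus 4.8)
The plan is to cash in the explicit form obtained in Proposition~\ref{VfinitaOm}. Since $V(x)=a\,x^{1-\gamma}/(1-\gamma)$ with $a>0$, the value function is $C^\infty$ on $(0,\infty)$, with $V'(x)=a\,x^{-\gamma}>0$ and $V''(x)=-a\gamma\,x^{-\gamma-1}<0$; hence $(V'(x),V''(x))$ lies in the region $\{p>0,\ P<0\}$ where the reduced Hamiltonian in \eqref{HJB3} is valid, so ``classical sense'' is meaningful. Substituting this form into \eqref{HJB3} and cancelling the common factor $x^{1-\gamma}$, the whole HJB collapses to a single scalar equation $g(a)=0$, where $g(a)=\tfrac{\gamma}{\gamma-1}a^{(\gamma-1)/\gamma}-A\,a$ and $A:=r+\tfrac{\lambda^2}{2\gamma}+\tfrac{\rho}{\gamma-1}$, whose positivity is exactly condition \eqref{eq:standing}. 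The map $g$ has the unique positive root $a^\ast=\big(\tfrac{\gamma}{A(\gamma-1)}\big)^{\gamma}$, with $g>0$ on $(0,a^\ast)$ and $g<0$ on $(a^\ast,\infty)$. Thus everything reduces to proving the two inequalities $\rho V\ge \mathcal{H}_{\max}(x,V',V'')$ (equivalently $a\le a^\ast$) and $\rho V\le \mathcal{H}_{\max}(x,V',V'')$ (equivalently $a\ge a^\ast$).

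The first inequality (supersolution) is the easy, DPP-based direction. I would fix an arbitrary constant control $(c,\pi)\in\mathbb{R}_+\times\mathbb{R}$ and a localizing sequence $(\tau_n)$ for the stochastic integral. From the sub-optimality contained in the Dynamic Programming Principle \eqref{DPP}, applied with $\tau=\tau_n\wedge h$, one has $V(x)\ge \mathbb{E}\!\left[\int_0^{\tau_n\wedge h}e^{-\rho s}\tfrac{c^{1-\gamma}}{1-\gamma}\,\mathrm{d}s+e^{-\rho(\tau_n\wedge h)}V(X_{\tau_n\wedge h})\right]$. Applying It\^o's formula to $e^{-\rho s}V(X_s)$ kills the stochastic integral along the localization and yields $0\ge \mathbb{E}\int_0^{\tau_n\wedge h}e^{-\rho s}\big(\Hcv(X_s,V',V'';c,\pi)-\rho V(X_s)\big)\,\mathrm{d}s$. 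Dividing by $h$ and letting first $n\uparrow\infty$ then $h\downarrow 0$, using continuity of the integrand and dominated convergence on $[x/2,2x]$ up to the exit time, gives $\rho V(x)\ge \Hcv(x,V'(x),V''(x);c,\pi)$; taking the supremum over $(c,\pi)$ produces $\rho V\ge \mathcal{H}_{\max}(x,V',V'')$, i.e. $a\le a^\ast$.

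The reverse inequality is the main obstacle, and it is where I would deviate from the textbook subsolution argument: near-optimal controls with large $|\pi|$ can drive the wealth out of any neighborhood of $x$ arbitrarily fast, so the time spent near $x$ need not be bounded below and the usual $h\downarrow0$ limiting argument for the viscosity subsolution property breaks down (precisely the kind of difficulty this paper is designed to avoid). Instead I would obtain the upper bound $V\le V^\ast:=a^\ast x^{1-\gamma}/(1-\gamma)$ directly through the true martingale deflator $Y$. For any admissible $(c,\pi)$ the Fenchel inequality for $u(c)=c^{1-\gamma}/(1-\gamma)$ gives, for every $\eta>0$, $e^{-\rho s}u(c_s)\le \eta\,Y_s c_s+\tfrac{\gamma}{1-\gamma}e^{-\rho s}(\eta e^{\rho s}Y_s)^{(\gamma-1)/\gamma}$. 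Integrating over $[0,\infty)$ and taking expectations, the first term is bounded by the budget constraint of Lemma~\ref{lem:budget-Y} with $\tau\equiv+\infty$, namely $\mathbb{E}\int_0^\infty Y_s c_s\,\mathrm{d}s\le x$, while the second term is deterministic once the log-normal moments of $Y$ are used: the integral $\int_0^\infty e^{-\rho s/\gamma}\,\mathbb{E}[Y_s^{(\gamma-1)/\gamma}]\,\mathrm{d}s$ converges to the finite constant $K=\tfrac{\gamma}{A(\gamma-1)}$ exactly because its exponential rate is negative under \eqref{eq:standing}. This yields $J(c)\le \eta x+\tfrac{\gamma}{1-\gamma}K\,\eta^{(\gamma-1)/\gamma}$; minimizing over $\eta>0$ (the optimizer being $\eta=(K/x)^{\gamma}$) gives precisely $J(c)\le K^{\gamma}x^{1-\gamma}/(1-\gamma)=V^\ast$. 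Since this holds for every admissible control, $V\le V^\ast$, i.e. $a\ge a^\ast=K^\gamma$.

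Combining the two inequalities forces $a=a^\ast$, so $g(a)=0$ and $V$ solves \eqref{HJB3} at every $x>0$ in the classical sense. The only points needing care are the localization/integrability justifications in the It\^o step (handled by stopping at exit times from compacts of $(0,\infty)$ together with the local-martingale structure of Lemma~\ref{lem:budget-Y}) and the convergence of the dual exponential integral under \eqref{eq:standing}; both are routine given the explicit Gaussian form of $Y$, so I expect the genuine content of the proof to sit entirely in identifying the reverse (upper-bound) inequality as above.
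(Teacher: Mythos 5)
Your proposal is correct, but it reaches the subsolution inequality by a genuinely different route than the paper. The supersolution direction coincides with what the paper itself delegates to the literature (the standard DPP/It\^o argument with constant controls); note only that a constant pair $(c,\pi)$ is not globally admissible, since the wealth can hit zero, so the application of \eqref{DPP} requires the standard concatenation: run the constant control until the exit time from $[x-\eps,x+\eps]$ (with $\eps<x$) and switch to $(c,\pi)\equiv(0,0)$ afterwards --- your phrase ``stopping at exit times from compacts'' gestures at this but it should be said explicitly. The real divergence is in the hard direction. The paper proves the subsolution property head-on, by a viscosity-style contradiction with the perturbed test function $\varphi(y)=V(y)+|y-x|^3$: the cubic term penalizes fast exits, and the uniform bound $\mathbb{E}\bigl[\delta\tau^h_{c,\pi}+|X_{\tau^h_{c,\pi}}-x|^3\bigr]\ge\min\{\delta,\eps^3\}\,h$ is exactly what defeats the unbounded-control difficulty you describe --- so your claim that the usual $h\downarrow0$ argument ``breaks down'' is overstated: it breaks down only without the exit penalty. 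You instead bypass the subsolution argument entirely via the Fenchel inequality and the deflator budget constraint (Lemma \ref{lem:budget-Y} with $\tau\equiv+\infty$, or $\tau=n$ plus monotone convergence), obtaining $J(c)\le \eta x+\tfrac{\gamma}{1-\gamma}K\eta^{(\gamma-1)/\gamma}$ and, after minimizing in $\eta$, the dual bound $V(x)\le a^*x^{1-\gamma}/(1-\gamma)$; your computations (the sign equivalences $\rho V\gtrless \mathcal{H}_{\max}\Leftrightarrow a\lessgtr a^*$, the moment $\mathbb{E}[Y_s^{(\gamma-1)/\gamma}]=e^{-\frac{\gamma-1}{\gamma}(r+\lambda^2/(2\gamma))s}$, the constant $K=\gamma/(A(\gamma-1))$, and $a^*=K^\gamma$ matching \eqref{Equa}) all check out, and condition \eqref{eq:standing} is indeed exactly $A>0$. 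What your route buys: it is shorter, it proves Proposition \ref{expression a} as a byproduct, and it only uses tools the paper has already established (Lemma \ref{lem:budget-Y} is itself a duality-type statement). What it costs: it is a Cox--Huang-type dual bound, hence tied to market completeness, the explicit Gaussian deflator, and power utility, whereas the paper's test-function argument is a general dynamic-programming device that would survive in settings with no computable deflator --- and avoiding duality machinery is precisely the methodological point the paper advertises in its introduction.
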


\begin{proof}
The proof of the fact that $V$ is a supersolution at each $x>0$, that is that the inequality $\geq$ holds in \eqref{HJB3} with $v=V$  is standard; we refer for instance to \cite[Section 4.3]{PhamBook}.

The proof of the fact that $V$ is a subsolution is less standard due to the unboundedness of the set of controls, i.e. $\R$ for $\pi$ and $\R^{+}$ for $c$. We therefore provide it.

Consider the function
\begin{eqnarray*}
\varphi (y) = V(y) + |y-x|^3,  \;\;\;\;\;\forall y \in (0,\infty).
\end{eqnarray*}
We observe that
\begin{equation}\label{pops}
\varphi(x) = V(x),  \ \ \varphi'(x) = V'(x),  \ \ \varphi''(x) = V''(x), \ \ \mbox{and} \ \  \varphi \geq V.
\end{equation}

Assume, by contradiction that the strict inequality $>$ holds in \eqref{HJB3} with $v=V$ at some point $x>0$. Then, by \eqref{pops},  it also holds the strict inequality $>$ in \eqref{HJB3} with $v=\varphi$ at $x>0$.
By continuity, there exist $\delta, \eps>0$ such that
\begin{eqnarray}\label{contradictionCons3}
\rho \varphi(y) & \geq &  \delta + ry\varphi'(y)-\frac{\lambda^{2}}{2}\frac{\varphi'(y)^{2}}{\varphi''(y)}+ \frac{\gamma}{1-\gamma }\varphi'(y)^{\frac{\gamma -1}{\gamma}}\nonumber\\
&= & \delta + \mathcal{H}_{\max}(y,\varphi'(y),\varphi''(y))
\\&\geq & \delta + \Hcv(y,\varphi'(y),\varphi''(y);c,\pi) \ \ \ \ \ \ \ \ \ \ \  \ \ \forall y \in [x-\eps, x+\eps], \ (c,\pi)\in \R^{+}\times \R. \nonumber
\end{eqnarray}
Now, given $(c ,\pi)\in \mathcal A(x)$  and $h \in (0,1)$, set $X_\cdot = X_\cdot^{x;c,\pi}$ and
$$
\tau^h_{c,\pi} := \inf \big\{s \ge 0 \::\: |X_s -x| \geq \eps\big\}  \wedge  h.
$$
Note that $\tau^h_{c,\pi}$ is a stopping time strictly larger than $0$. By definition of $\tau^h_{c,\pi}$, we have
$X_{s} \in [x-\eps, x+\eps]$ for each $s \in [0,\tau^h_{c,\pi}]$. Then, using \eqref{contradictionCons3}, we get
\begin{eqnarray}
\rho \varphi(X_s) \geq
  \delta + \Hcv\left(X_s,\varphi'(X_{s}),\varphi''(X_{s});\,c_s,\pi_{s}\right)
\ \  \forall  s \in [0,\tau^h_{c,\pi}]. \label{contradictionCons4}\end{eqnarray}
We apply It\^o's formula to $e^{-\rho s}  \varphi(X_s)$ in the interval $[0, \tau^h_{c,\pi}]$, obtaining
\begin{eqnarray*}
 \varphi(x) -  e^{-\rho \tau^h_{c, \pi}} \varphi(X_{\tau^h_{c,\pi }})   &=&
\int_0^{\tau^h_{c,\pi}} e^{-\rho s} \left(\rho \varphi({X}_s) -\Hcv(X_s,\varphi'(X_s),\varphi''(X_s);\,c_s,\pi_{s})+\frac{c_{s}^{1-\gamma}}{1-\gamma}\right) \d s\\
&& - \int_0^{ \tau^h_{c,\pi}}\sigma e^{-\rho s}   \varphi' ({X}_s) \pi_{s} \d W_s.
\end{eqnarray*}
We may pass to the expected value by taking account that, due to definition of  $\tau^h_{c,\pi}$, the expected value of the stochastic integral vanishes. We therefore obtain
\begin{eqnarray*}
 \varphi(x) - \E\left[ e^{-\rho \tau^h_{c, \pi}} \varphi(X_{\tau^h_{c,\pi}}) \right]  \ = \ \E\left[
\int_0^{\tau^h_{c,\pi}} e^{-\rho s} \left(\rho \varphi({X}_s) -\Hcv(X_s,\varphi'(X_s),\varphi''(X_s);\,c_s,\pi_{s})+\frac{c_{s}^{1-\gamma}}{1-\gamma}\right) \d s\right].
\end{eqnarray*}
Combining with  \eqref{contradictionCons4}, we get
\begin{eqnarray*}
\varphi(x) - \E \left[ e^{-\rho \tau^h_{c,\pi}} \varphi(X_{\tau^h_{c,\pi}}) \right] \ge
\delta   \E\left[   \int_0^{\tau^h_{c,\pi}} e^{-\rho s}  ds  \right]
 + \E \left[ \int_0^{\tau^h_{c,\pi}}e^{-\rho s} \frac{c_s^{1-\gamma}}{1-\gamma} \d s\right].
\end{eqnarray*}
Since $ \varphi(x) = V(x)$ and $\varphi(y) = V(y)  + |y-x|^3$ for each $y \in (0,\infty)$, recalling that $\tau^h_{c,\pi} \le h <1$, we get
\begin{eqnarray*}
V(x) - \E \left[ e^{-\rho \tau^h_{c,\pi}}V(X_{\tau^h_{c,\pi}}) \right] \ge
\delta e^{-\rho} \E\left[\tau^h_{c_\cdot,\pi_{\cdot}}  +|X_{\tau^h_{c,\pi}}-x|^3 \right]+ \E \left[ \int_0^{\tau^h_{c,\pi}}e^{-\rho s} \frac{c_s^{1-\gamma}}{1-\gamma} \d s\right]
\end{eqnarray*}
i.e.
\begin{eqnarray*}
V(x) -  e^{-\rho} \E\left[\delta\tau^h_{c,\pi}  +|X_{\tau^h_{c,\pi}}-x|^3 \right]\geq \E \left[ e^{-\rho \tau^h_{c,\pi}}V(X_{\tau^h_{c,\pi}}) +\int_0^{\tau^h_{c,\pi}}e^{-\rho s} \frac{c_s^{1-\gamma}}{1-\gamma} \d s\right].
\end{eqnarray*}
Taking ${\sup}_{(c,\pi) \in \A(x)}$ on both sides of the previous inequality and recalling  DPP \eqref{DPP}, we end up with
$$
\sup_{(c,\pi) \in \A(x)}\left(-   \E\left[\delta\tau^h_{c,\pi}  +|X_{\tau^h_{c,\pi}}-x|^3 \right]\right)\geq 0,
$$
i.e.
\begin{eqnarray}\label{lastIneq}
\inf_{(c,\pi)\in\mathcal {A}(x)}  \E\left[\delta\tau^h_{c,\pi}  +|X_{\tau^h_{c,\pi}}-x|^3 \right]\le 0.
\end{eqnarray}
Now note that, uniformly in $({c},\pi)\in\mathcal{A}(x)$,  we have
\begin{eqnarray}\label{lastIneq1}
\E[\tau^h_{c,\pi}] = \E\left[ \tau^h_{c,\pi} \mathds{1}_{\{\tau^h_{c,\pi} < h\}} + \tau^h_{c,\pi} \mathds{1}_{\{\tau^h_{c,\pi} \ge h\}}\right]
\ge h\,\P\{\tau^h_{c,\pi} \ge h\}
\end{eqnarray}
and
\begin{eqnarray}\label{lastIneq2}
\E \left[ |X_{\tau^h_{c,\pi}}-x|^3 \right] &=&
\E \left[ |X_{\tau^h_{c,\pi}}-x|^3 \mathds{1}_{\{\tau^h_{c,\pi} < h\}} + \ |X_{\tau^h_{c,\pi}}-x|^3 \mathds{1}_{\{\tau^h_{c,\pi} \ge h\}}\right] \nonumber\\
&\geq & \eps^3\,\, \P\{\tau^h_{c,\pi} < h\}
\ \ge \ \eps^3 \ h \ \P\{\tau^h_{c,\pi} < h\}.
\end{eqnarray}
Combining
\eqref{lastIneq1} and \eqref{lastIneq2}, we get, uniformly in $({c},\pi)\in\mathcal{A}(x)$,
$$\delta \E\left[\tau^h_{c,\pi}\right] +  \E \left[ |X_{\tau^h_{c,\pi}}-x|^3 \right]\geq \min\left\{\delta,\eps^{3}\right\}\cdot h,$$
 contradicting \eqref{lastIneq} and concluding the proof.
\end{proof}

\begin{Proposition}	\label{expression a}
The constant $a$ in \eqref{equa} is
\begin{equation}		\label{Equa}
a=\left( \dfrac{\rho-(1-\gamma)\left( r+\dfrac{\lambda^2}{2\gamma} \right)}{\gamma}\right)^{-\gamma}.
\end{equation}
\end{Proposition}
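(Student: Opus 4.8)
The plan is to pin down $a$ by direct substitution. By Proposition~\ref{VfinitaOm} we already know that $V(x)=a\,x^{1-\gamma}/(1-\gamma)$ for some $a>0$, and by Proposition~\ref{SolvesHJB} this $V$ solves the HJB equation \eqref{HJB3} classically at every $x>0$. Since \eqref{HJB3} involves $V$ only through $V,V',V''$, and these become fully explicit once the power form is fixed, inserting them turns the functional equation into a single scalar algebraic equation for $a$, which we then solve.

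Concretely, I would compute $V'(x)=a\,x^{-\gamma}$ and $V''(x)=-a\gamma\,x^{-\gamma-1}$; note in passing that $V'>0$ and $V''<0$ (using $a>0$ and $\gamma>1$), which is precisely what makes the reduced Hamiltonian \eqref{HJB3} the relevant one. Substituting into the three terms on the right-hand side of \eqref{HJB3} gives $rxV'(x)=ra\,x^{1-\gamma}$, then $-\tfrac{\lambda^{2}}{2}\,V'(x)^{2}/V''(x)=\tfrac{\lambda^{2}a}{2\gamma}\,x^{1-\gamma}$ (the power of $x$ collapses because the net exponent is $-2\gamma-(-\gamma-1)=1-\gamma$), and finally $\tfrac{\gamma}{1-\gamma}\,V'(x)^{(\gamma-1)/\gamma}=\tfrac{\gamma}{1-\gamma}\,a^{(\gamma-1)/\gamma}x^{1-\gamma}$. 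The left-hand side is $\rho a\,x^{1-\gamma}/(1-\gamma)$, so every term carries the common positive factor $x^{1-\gamma}$, which I cancel.

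After cancellation and multiplication by $(1-\gamma)$, the equation becomes
\[
\rho a = (1-\gamma)\Big(r+\tfrac{\lambda^{2}}{2\gamma}\Big)a + \gamma\,a^{(\gamma-1)/\gamma}.
\]
Collecting the two terms proportional to $a$ on the left and dividing by $a^{(\gamma-1)/\gamma}>0$ leaves $a^{1/\gamma}\big[\rho-(1-\gamma)(r+\lambda^{2}/(2\gamma))\big]=\gamma$, where I use the exponent identity $1-\tfrac{\gamma-1}{\gamma}=\tfrac{1}{\gamma}$. Solving for $a$ yields exactly \eqref{Equa}.

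The computation presents no genuine obstacle; the only point worth a word is well-posedness of the resulting formula. The bracket $\rho-(1-\gamma)(r+\lambda^{2}/(2\gamma))$ is strictly positive precisely by the standing condition \eqref{eq:standing}, so $a^{1/\gamma}$, and hence $a$, is a well-defined strictly positive number, consistent with the sign $a>0$ already guaranteed by Proposition~\ref{VfinitaOm}. This consistency, namely that positivity of the denominator coincides with the finiteness condition ensuring $V>-\infty$, is the conceptual content of the statement and confirms that the candidate power solution matches the finiteness regime.
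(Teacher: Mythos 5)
Your proposal is correct and follows essentially the same route as the paper: both substitute the power form $V(x)=a\,x^{1-\gamma}/(1-\gamma)$ from Proposition~\ref{VfinitaOm} into the HJB equation \eqref{HJB3} (justified by Proposition~\ref{SolvesHJB}), cancel the common factor $x^{1-\gamma}$, and solve the resulting scalar equation for $a$. Your closing remark that the standing condition \eqref{eq:standing} makes the bracket strictly positive, hence $a$ well defined and positive, is a correct consistency check that the paper leaves implicit.
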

\begin{proof}
By Propositions \ref{VfinitaOm} and \ref{SolvesHJB} we  have
$$
\rho \dfrac{a}{1-\gamma}x^{1-\gamma}=rax^{1-\gamma}+\dfrac{\lambda^2 a}{2\gamma}x^{1-\gamma}+\dfrac{\gamma}{1-\gamma} a^{\frac{\gamma-1}{\gamma}} x^{1-\gamma}.
$$
Since $a>0$ and the above equality must hold for each $x>0$,
we get the equality
$$
\rho\,\dfrac{1}{1-\gamma}=r+\dfrac{\lambda^2}{2\gamma}+\dfrac{\gamma}{1-\gamma} a^{-\frac{1}{\gamma}},
$$
and the claim follows.
\end{proof}

\begin{Remark}
\label{rm:uniq}
Note that the \emph{HJB} equation \eqref{HJBMerton3} does not have a unique solution if we do not add any boundary condition: indeed, a simple check shows that also $v\equiv 0$ is a solution. However, this fact does not create any problems in our approach, as we already know that the value function is not zero (Proposition \ref{VfinitaOm}, \emph{nontrivially part}), which is a key point of our approach.  
\end{Remark}

Finally observe that the parameters appearing in \eqref{Equa} play an important role in shaping the optimal controls, as discussed in the next section.

\subsection{The ``half-verification'' and the optimal feedback strategy}\label{sub:ver}
In this subsection we finalize our approach by providing the true optimality of the candidate optimal feedback map that comes out from the optimization in $\Hcv$.

Using   \eqref{equa}, the candidate optimal feedback map provided by the maximization in \eqref{maxH} (see \eqref{maxcpi})  is the map $G:(0,\infty) \to \R^+\times \R$ defined by
$$
G(x)=(G^c(x),G^\pi(x))=\left(c_{\max}(x,V'(x),V''(x)),\,\,\pi_{\max}(x,V'(x),V''(x)) \right)
=\left( a^{-1/\gamma}x, \,\,\dfrac{\lambda}{\sigma\gamma}x\right),
$$
where $a$ is given in \eqref{Equa}.
Plugging this map in the  state equation \eqref{eqCSE}, we get  the following closed loop equation associated to $G$
\begin{equation}	\label{eqCLE}
\begin{cases}
\d X_t = \left(r+ \dfrac{\lambda^2}{\gamma} -a^{-1/\gamma}\right) X_t  \, \d t + \dfrac{\lambda}{\gamma} X_t\, \d W_t ,\\
X_{0}=x>0,
\end{cases}
\end{equation}

whose  explicit solution is
\begin{equation}	\label{solCLE}
\widehat{X}_t=x\exp{\left[  \left(r+ \dfrac{\lambda^2}{\gamma} -a^{-1/\gamma}-\dfrac{\lambda^2}{2\gamma^2}\right)  t + \dfrac{\lambda}{\gamma} W_t\right]}=
x\exp{\left[  \left(\dfrac{r-\rho}{\gamma}+\dfrac{\lambda^2}{2\gamma}\right)t + \dfrac{\lambda}{\gamma} W_t\right]}.
\end{equation}

Define now the feedback control
$\left(\widehat{c}, \widehat{\pi} \right)$ by
\begin{equation}		\label{optimalStr}
\widehat{c}_t:=c_{\max}(\widehat{X}_t,V'(\widehat{X}_t),V''(\widehat{X}_t))=a^{-1/\gamma} \widehat{X}_t, \quad \widehat{\pi}_t:=\pi_{\max}(\widehat{X}_t,V'(\widehat{X}_t),V''(\widehat{X}_t)) =\dfrac{\lambda}{\sigma \gamma}\widehat{X}_t.
\end{equation}

In line with \cite[Remark~2.2]{DN}, the constant $a$  appearing in \eqref{Equa} shapes the agent's optimal investment strategy, by allowing for hedging (positive allocation across the risky and riskless asset), leverage (riskless borrowing to increase the risky asset allocation), and short selling (increasing the riskless asset allocation by selling short the risky asset).
Differently from  \cite{DN}, and in line with \cite{HerHobJe2020MAFI}, we allow the subjective discount rate $\rho$ to take nonpositive values. As discussed in
\cite{HerHobJe2020MAFI}, Appendix~D, this offers greater flexibility in allowing our problem to encompass different accounting units.

\begin{Theorem}
The feedback strategy  $\left(\widehat{c},\widehat{\pi}\right)$ defined in \eqref{optimalStr} belongs to $\A(x)$  and is  optimal.
\end{Theorem}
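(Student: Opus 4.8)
The plan is to verify the two assertions separately: admissibility of $(\widehat{c},\widehat{\pi})$, and the identity $J(x;\widehat{c},\widehat{\pi})=V(x)$. The crucial structural remark — the reason this is only a \emph{half}-verification — is that, by Proposition~\ref{VfinitaOm}, $V$ is already known to be the value function, so the inequality $J(x;\widehat{c},\widehat{\pi})\le V(x)$ holds trivially from the definition of the supremum in \eqref{eqValueFunc}. Consequently, to conclude optimality it suffices to exhibit the \emph{reverse} inequality, and this I would obtain not as an inequality at all but as an \emph{equality}, by computing $J(x;\widehat{c},\widehat{\pi})$ in closed form. No transversality condition and no comparison against competing strategies enters.

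For admissibility, I would note that the closed-loop wealth $\widehat{X}$ in \eqref{solCLE} is a geometric Brownian motion, hence strictly positive for every $t\ge0$; this yields at once $\widehat{X}\ge0$ (so the state constraint defining $\mathcal{A}(x)$ is met) and $\widehat{c}_t=a^{-1/\gamma}\widehat{X}_t\ge0$. Since $\widehat{X}_t$ is lognormal, all its moments are finite and locally bounded in $t$; as $\widehat{c}$ and $\widehat{\pi}$ are constant multiples of $\widehat{X}$, the requirements $\mathbb{E}[\int_0^R\widehat{c}_t\,\d t]<\infty$ and $\mathbb{E}[\int_0^R\widehat{\pi}_t^{\,2}\,\d t]<\infty$ for every $R>0$ follow immediately, giving $(\widehat{c},\widehat{\pi})\in\mathcal{A}(x)$.

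For the value identity, I would substitute the explicit lognormal expression \eqref{solCLE} for $\widehat{X}_s$ into $\widehat{c}_s=a^{-1/\gamma}\widehat{X}_s$, raise to the power $1-\gamma$, and take expectations using $\mathbb{E}[e^{\theta W_s}]=e^{\theta^2 s/2}$. Collecting terms, the integrand $e^{-\rho s}\,\mathbb{E}[\widehat{c}_s^{\,1-\gamma}]$ reduces to a constant times $e^{\alpha s}$ with decay rate $\alpha=\tfrac1\gamma\big[(1-\gamma)\big(r+\tfrac{\lambda^2}{2\gamma}\big)-\rho\big]$, which is strictly negative \emph{exactly} by the standing condition \eqref{eq:standing}; hence the time integral converges to $1/|\alpha|$. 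The only arithmetic to watch is that the two powers of $a$ then recombine: the prefactor $(a^{-1/\gamma})^{1-\gamma}=a^{(\gamma-1)/\gamma}$ from the consumption, multiplied by $a^{1/\gamma}=\gamma/\big(\rho-(1-\gamma)(r+\tfrac{\lambda^2}{2\gamma})\big)=1/|\alpha|$ from the reciprocal rate, gives precisely $a^1=a$, using the explicit value \eqref{Equa}. The constant therefore collapses to $a\,x^{1-\gamma}/(1-\gamma)=V(x)$, so $J(x;\widehat{c},\widehat{\pi})=V(x)$ and $(\widehat{c},\widehat{\pi})$ is optimal. This final cancellation is routine but is where all the earlier bookkeeping — in particular the identification \eqref{Equa} of $a$ — pays off; it is the only genuinely substantive step, the conceptual difficulty of the high-risk-aversion case having been sidestepped entirely, since the convergence of the time integral is furnished directly by \eqref{eq:standing} rather than by any limiting or transversality argument.
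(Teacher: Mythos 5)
Your proof is correct, but it takes a genuinely different route from the paper's. The paper proves optimality via the It\^o/HJB machinery: it applies It\^o's formula to $e^{-\rho t}V(\widehat{X}_t)$, uses Proposition~\ref{SolvesHJB} together with the fact that $(\widehat{c},\widehat{\pi})$ attains the maximum in $\mathcal{H}_{\max}$ to make the drift correction vanish, checks by an explicit second-moment bound that the stochastic integral is a true martingale, discards the term $\E\bigl[e^{-\rho t}V(\widehat{X}_t)\bigr]\le 0$ thanks to the sign of $V$, and lets $t\to\infty$ by monotone convergence to obtain $V(x)\le J(\widehat{c})$, the reverse inequality being the definition of the supremum. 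You instead compute $J(x;\widehat{c},\widehat{\pi})$ in closed form from the explicit geometric Brownian motion \eqref{solCLE}: your decay rate $\alpha=\tfrac1\gamma\bigl[(1-\gamma)\bigl(r+\tfrac{\lambda^2}{2\gamma}\bigr)-\rho\bigr]$ is correct, it is strictly negative precisely under \eqref{eq:standing}, and the cancellation $a^{(\gamma-1)/\gamma}\cdot a^{1/\gamma}=a$ (using $a^{1/\gamma}=1/|\alpha|$ from \eqref{Equa}) indeed gives $J=a\,x^{1-\gamma}/(1-\gamma)=V(x)$, where the identification of $V$ with this expression is Propositions~\ref{VfinitaOm} and~\ref{expression a}. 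Your admissibility argument coincides with the paper's. What your route buys: it is more elementary --- no It\^o formula, no martingale verification, no limiting argument beyond Tonelli (which you should invoke explicitly to justify exchanging $\E$ with $\int_0^\infty$; it applies because the integrand has constant sign) --- and it makes transparent that \eqref{eq:standing} is exactly the convergence condition. What it costs: it is entirely tied to the explicit solvability of this model (lognormal closed loop, closed-form $a$), whereas the paper's argument uses only the HJB property and the sign of $V$, so it is the portable ``half-verification'' template the paper is advertising. Note finally that your computation still rests on the full chain Propositions~\ref{VfinitaOm}--\ref{expression a}: without Proposition~\ref{expression a} (hence Proposition~\ref{SolvesHJB}), it would only show that $J(\widehat{c})$ equals the \emph{candidate} value, not the value function.
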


\begin{proof}
\begin{enumerate}
\item We first show that $(\widehat{c},\widehat{\pi})\in\mathcal{A}(x)$.
In fact, for all $R<0$,\footnote{Hereafter, recall that $\E\left[ e^{\alpha W_{t}}\right]=e^{\alpha^2t/2}$ for  $\alpha\in\R$.}
\[
\E\left[ \int_0^R |\widehat{c}_t| \d t \right]=a^{-1/\gamma} \E\left[ \int_0^R |\widehat{X}_t |\d t \right] <\infty
\]
and similarly
\[
\E\left[ \int_0^R |\widehat{\pi}_t|^2 \d t \right]=\left(\dfrac{\lambda}{\sigma\gamma}\right)^2  \E\left[ \int_0^R |\widehat{X}_t| ^2 \d t \right]<\infty.
\]
Moreover, since
$
\widehat{X}_{\cdot}$ and $X^{x;\widehat{c},\widehat{\pi}}_{\cdot}$ are indistinguishable processes (by uniqueness of solution of the SDE), we have $X_{\cdot}^{x;\widehat{c},\widehat{\pi}}= \widehat{X}_{\cdot}\geq 0$.

\item
Since  $V$ is a classical solution of the HJB \eqref{HJB3},  by applying It\^o formula to
$e^{-\rho t}V(\widehat{X}_t)$ from $0$ to $t$ we get

\begin{equation*}		
\begin{aligned}
V(x) & =e^{-\rho t} V(\widehat{X}_t)+\int_0^t e^{-\rho s} \frac{\widehat{c}_s\,^{1-\gamma}}{1-\gamma} \d s -\int_0^t   e^{-\rho s} V'(\widehat{X}_s) \sigma\widehat\pi_s \d W_s\\
& +\int_0^t e^{-\rho s}\left[  \rho V(\widehat{X}_s) -\Hcv\left(\widehat{X}_s, V^{\prime}(\widehat{X}_s), V^{\prime \prime}(\widehat{X}_s) ;\widehat{c}_s, \widehat{\pi}_s \right) \right] \d s \\
&=e^{-\rho t} V(\widehat{X}_t)+\int_0^t e^{-\rho s} \frac{\widehat{c}_s\,^{1-\gamma}}{1-\gamma} \d s -\int_0^t   e^{-\rho s} V'(\widehat{X}_s) \sigma\widehat\pi_s \d W_s\\
& +\int_0^t e^{-\rho s}\left[\mathcal{H}_{\max }\left(\widehat{X}_s, V^{\prime}(\widehat{X}_s), V^{\prime \prime}(\widehat{X}_s)\right) -\Hcv\left(\widehat{X}_s, V^{\prime}(\widehat{X}_s), V^{\prime \prime}(\widehat{X}_s) ;\widehat{c}_s, \widehat{\pi}_s \right) \right] \d s .
\end{aligned}
\end{equation*}
By definition of $(\widehat{c},\widehat{\pi})$, the integrand in the last integral vanishes. So, the previous equality rewrites as
\begin{equation}		\label{eq322}
V(x)=e^{-\rho t} V(\widehat{X}_t)+\int_0^t e^{-\rho s} \frac{\widehat{c}_s\,^{1-\gamma}}{1-\gamma} \d s -   \int_0^t   e^{-\rho s} V'(\widehat{X}_s)\sigma \widehat\pi_s \d W_s.
\end{equation}

Now, observe that the stochastic integral is a zero-mean martingale; in fact, by the expression for $V$ provided by \eqref{equa}, we have
\begin{align*}
&\E\left[  \int_0^t  e^{-2\rho s}\sigma^2 \big| V'(\widehat{X}_s) \widehat\pi_s\big|^2 \d s \right]=\E\left[ \int_0^t  (e^{-\rho s}\sigma a)^2 \widehat{X}_s^{2(1-\gamma)}  \d s \right]<\infty.
\end{align*}
Hence, taking the expectation in \eqref{eq322} the stochastic integral disappears, leading to
\begin{equation}		\label{eq323}
\begin{aligned}
V(x) & =\mathbb{E}\left[e^{-\rho t} V(\widehat{X}_t)\right]+\mathbb{E}\left[\int_0^t e^{-\rho s} \frac{\widehat{c}_s\,^{1-\gamma}}{1-\gamma} \d s\right] .\\
\end{aligned}
\end{equation}
Considering that $V$ is nonpositive, we get
\begin{equation}	\label{eq324}
\begin{aligned}
V(x) & \leq \mathbb{E}\left[\int_0^t e^{-\rho s} \frac{\widehat{c}_s\,^{1-\gamma}}{1-\gamma} \d s\right].
\end{aligned}
\end{equation}
The integrand in the last integral is nonpositive; hence, by monotone convergence we have
$$
V(x)\leq \lim_{t\to \infty}\mathbb{E}\left[\int_0^t e^{-\rho s} \frac{\widehat{c}_s\,^{1-\gamma}}{1-\gamma} \d s\right] = J(\widehat{c}_\cdot).
$$
and  the claim follows.
%
%
\vspace{-.6cm}
\end{enumerate}
\hspace{5cm}\end{proof}

\subsection{Discussion and comments}	\label{remTrCon}

In this subsection, we elaborate on our method relative to the classical approach and discuss the reasons that motivate its use.

Classical verification theorems (see, e.g., \cite[Ch.\,3, Sec.\,5]{PhamBook} or \cite[Ch.\,5, Sec.\,4]{YongZhou}) typically start with a smooth solution \( v \) to the Hamilton-Jacobi-Bellman (HJB) equation, proceeding as follows:
\begin{enumerate}[(i)]
    \item Exploiting the fact that \( v \) is a supersolution to the HJB, one shows that \( v \geq V \);
    \item Exploiting the fact that \( v \) is also a subsolution, and knowing that \( v \geq V \), one concludes that \( v = V \) and simultaneously derives sufficient optimality conditions.
\end{enumerate}
In the finite horizon Merton problem, both steps can usually be carried out without major issues.

However, in the infinite horizon case, both steps  rely on the validity of transversality conditions  as \( t \to \infty \) which must be verified for the specific problem at hand. The typical transversality conditions that ensure the argument's validity are (see, e.g., \cite[Th.\,3.5.3]{PhamBook}):
\begin{enumerate}[(i)]
    \item The limit condition
    \begin{equation}\label{stra}
    \limsup_{t \to \infty} \mathbb{E}\left[ e^{-\rho t} v\left( X_{t}^{x;c_\cdot,\pi_\cdot} \right) \right] \geq 0 \quad \forall (c_\cdot, \pi_\cdot) \in \mathcal{A}(x),
    \end{equation}
    which appears in the first step of the proof.
    \item The limit condition
    \begin{equation}\label{strabis}
    \liminf_{t \to \infty} \mathbb{E}\left[ e^{-\rho t} v( \widehat{X}_{t}^{x;c_\cdot,\pi_\cdot} ) \right] \leq 0,
    \end{equation}
    used in the second step.
\end{enumerate}

In the case of the Merton problem with \( \gamma \in (0,1) \), one can\footnote{See, e.g., \cite[IV.\,Example~5.2]{FS2006} and \cite[Section~3.6.2]{PhamBook}}:
\begin{itemize}
    \item (i) eliminate the first step because the value function is nonnegative in this case;
    \item (ii) perform the second step straightforwardly using the condition \eqref{eq:standing}.
\end{itemize}

Unfortunately, when \(\gamma > 1\), the initial step becomes significantly more challenging, as \eqref{stra} does not hold for all admissible strategies. For instance, consider the feedback strategy defined by \( c(x) = \alpha x \) and \( \pi(x) \equiv 0 \). This strategy fails to satisfy the condition when \(\alpha\) is large, with the candidate value function
\[
v(x) = a \frac{x^{1-\gamma}}{1-\gamma}.
\]
In this scenario, the state process evolves as
$
X_t = x e^{(r - \alpha)t},
$
and consequently,
\[
\lim_{t \to \infty} \mathbb{E}\left[ e^{-\rho t} \frac{X_t^{1-\gamma}}{1-\gamma} \right] = -\infty,
\]
if
\[
\alpha > \frac{\rho}{\gamma - 1} + r.
\]

Our approach is more straightforward: {since we already know that \(V\) is nonzero (see Proposition \ref{VfinitaOm} and  Remark \ref{rm:uniq})} and solves the Hamilton-Jacobi-Bellman (HJB) equation beforehand, we can bypass the first step and focus solely on the second. This is why we refer to it as a ``half-verification''. Moreover, in our framework, verifying the second step does not require checking any transversality condition because we can eliminate the term
\[
\mathbb{E}\left[ e^{-\rho \tau_{n}} V( X^{*}_{\tau_{n}} ) \right]
\]
from the proof, thanks to its favorable sign (see the transition from equation \eqref{eq323} to \eqref{eq324}).

Finally, note that this approach can also be applied in the finite horizon case and in the infinite horizon case when \(\gamma \in (0,1)\). However, while this would simplify the verification process, it would also entail replicating some of the work carried out in Subsections \ref{subsec:finiteness} and \ref{sec:HJB}.


\newpage

\end{document}